\documentclass[12pt]{article} 

\usepackage{amsmath}
\usepackage{amsthm}
\usepackage{amsfonts}
\usepackage{mathrsfs}
\usepackage{stmaryrd}
\usepackage{setspace}
\usepackage{fullpage}
\usepackage{amssymb}
\usepackage{breqn}
\usepackage{enumitem}
\usepackage{bbold} 
\usepackage{authblk}
\usepackage{comment}
\usepackage{hyperref}
\usepackage{pgf,tikz}
\usepackage{graphicx}
\usepackage{colonequals}
\usetikzlibrary{decorations.pathreplacing,arrows}
\tikzstyle{vertex}=[circle,draw=black,fill=black,inner sep=0,minimum size=3pt,text=white,font=\footnotesize]

\bibliographystyle{plain}

\newtheorem*{thm*}{Theorem}
\newtheorem{thm}{Theorem}%[chapter]

\newtheorem{lemma}[thm]{Lemma}

\newtheorem{corollary}[thm]{Corollary}

\newtheorem*{proposition*}{Proposition}

\newcommand\cH{{\mathcal H}}

\newcommand{\ignore}[1]{}

\title{On Berge-Ramsey problems}
\author{D\'aniel Gerbner\thanks{Research supported by the J\'anos Bolyai Research Fellowship of the Hungarian Academy of Sciences and the National Research, Development and Innovation Office -- NKFIH under the grants K 116769, KH 130371 and SNN 129364.}
\\
\small Alfr\'ed R\'enyi Institute of Mathematics, Hungarian Academy of Sciences\\
\small P.O.B. 127, Budapest H-1364, Hungary.\\
\small \texttt{gerbner@renyi.hu}}

\begin{document}
\maketitle

\begin{abstract} Given a graph $G$, a hypergraph $\cH$ is a Berge copy of $F$ if $V(G)\subset V(\cH)$ and there is a bijection $f:E(G)\rightarrow E(\cH)$ such that for any edge $e$ of $G$ we have $e\subset f(e)$. We study Ramsey problems for Berge copies of graphs, i.e. the smallest number of vertices of a complete $r$-uniform hypergraph, such that if we color the hyperedges with $c$ colors, there is a monochromatic Berge copy of $G$. 

We obtain a couple results regarding these problems. In particular, we determine for which $r$ and $c$ the Ramsey number can be super-linear. We also show a new way to obtain lower bounds, and improve the general lower bounds by a large margin. In the specific case $G=K_n$ and $r=2c-1$, we obtain an upper bound that is sharp besides a constant term, improving earlier results.

\end{abstract}

\section{Introduction}

Gerbner and Palmer, \cite{gp1}, extending the definition of hypergraph cycles due to Berge, introduced the so-called Berge hypergraphs. Given a graph $G$ and a hypergraph $\cH$, we say that $\cH$ is a \textit{Berge copy of $F$} (in short: \textit{Berge-$F$}) if $V(G)\subset V(\cH)$ and there is a bijection $f:E(G)\rightarrow E(\cH)$ such that for any edge $e$ of $G$ we have $e\subset f(e)$. In other words, we can extend edges of $G$ to obtain $\cH$, or we can shrink hyperedges of $\cH$ to obtain a copy of $G$. Observe that a graph $G$ may have several non-isomorphic Berge copies, and a hypergraph $\cH$ may be Berge copy of several different graphs. 

For a graph $G$, we denote by $B^rG$ the family of $r$-uniform Berge copies of $G$.
As we will often talk about graphs and hypergraphs in the same proofs, we will always refer to graph edges as \emph{edges} and hypergraph edges as \emph{hyperedges} or $r$-edges (if they have size $r$), to help distinguish them.

Tur\'an-type problems for Berge hypergraphs have attracted a lot of researchers, see e.g. \cite{fkl,gmp,gmv,gp2,GMTthreshold,gykl,gyl}. Recently, the study or Ramsey problems for Berge hypergraphs has been initiated independently by three groups of researchers \cite{agy,gmov,STWZ} (for Berge cycles there have been earlier results, see e.g. \cite{gylss,reza}). 

For graphs $G_1,G_2,\dots,G_c$, the \textit{Ramsey number} $R(B^rG_1,B^rG_2,\dots,B^rG_c)$ is the smallest integer $N$ such that if the hyperedges of the complete $r$-uniform hypergraph on $N$ vertices are colored using colors $1,2,\dots,c$, then there is an $i\le c$, such that there is a Berge copy of $G_i$ with each of its hyperedges colored $i$. In case $G_1=G_2=\dots=G_c$, we denote the Ramsey number by $R^c(B^rG_1)$.

Most of the earlier results \cite{gmov,STWZ} focused on the case $c$ and $r$ are fixed and the graphs $G_1,G_2,\dots,G_c$ can be large. This is our approach here as well. In particular \cite{gmov} determined $R(B^rG_1,B^rG_2,\dots,B^rG_c)$ exactly in case $r\ge 2c$ and at least one of the graphs $G_i$ has at least $12c\binom{r}{2}$ vertices. 
In case $r=2c-1$, they proved $1+c\lfloor\frac{n-2}{c-1}\rfloor\le R^c(B^rK_n)\le (2c-1)\frac{n}{c-3}$, provided $n$ is large enough. Note that for $c=2$ and $r=3$, the exact result $R^2(B^3K_n)=2n-3$ if $n\ge 5$ is proved in \cite{STWZ}. As our first result, we close the gap for larger $c$, apart from a constant additive term, by showing the following.

\begin{thm}\label{thm1} For any $r$ and $c$, if $n$ is large enough, then we have \[R^c(B^{2c-1}K_n)\le\frac{cn}{c-1}+c(2c-1)(c-2)\binom{2c-1}{2}.\]

\end{thm}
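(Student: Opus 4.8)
The plan is to reduce the existence of a monochromatic Berge-$K_n$ to a purely \emph{local} condition by exploiting the freedom in the bijection $f$. The key observation is the following \emph{pendant principle}: suppose we have disjoint vertex sets $C$ and $D$ and a color $i$ such that every pair $\{u,v\}\subseteq C$ is contained in some color-$i$ hyperedge $e$ with $e\setminus\{u,v\}\subseteq D$. Then, whenever $|C|=n$, there is a color-$i$ Berge-$K_n$ with core $C$: assign to each pair one such hyperedge, and observe that two distinct pairs automatically receive distinct hyperedges, since a hyperedge meeting $C$ in exactly the two vertices $u,v$ determines the pair $\{u,v\}$. Thus no Hall-type verification is needed, and the whole problem collapses to choosing $i$, $C$ and $D$. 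This is where the uniformity $r=2c-1$ enters: a pendant hyperedge spends exactly $2c-3$ of its vertices in the reservoir $D$, so a reservoir of size linear in $n$ already produces $\binom{|D|}{2c-3}$ candidate hyperedges per pair, and among the $c$ colors at least one occurs on a $1/c$ fraction of them; for $c\ge 3$ this is polynomially many per pair, far more than one needs.

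With this reduction in hand, I would fix a reservoir $D$ of size roughly $\frac{n}{c-1}$ together with the additive slack, set the core-candidates to be $M=[N]\setminus D$ (so that $|M|\ge n$ by the choice of $N$), and build the core greedily for a single color. Concretely, I would process the vertices of $M$ one at a time, maintaining a current core $C$ and a fixed working color $i$, and admit a new vertex $v$ precisely when every new pair $\{v,w\}$ with $w\in C$ already has a color-$i$ pendant hyperedge over $D$. A vertex that cannot be admitted for the current color is charged to a hyperedge that obstructs it. The aim is to show that, across all $c$ colors, the total number of vertices that must be discarded is bounded independently of $n$: each obstruction is witnessed by a bounded amount of data, namely a single hyperedge of size $2c-1$ together with the $\binom{2c-1}{2}$ pairs inside it, and there are $c$ colors with at most $c-2$ changes of working color to account for, which is exactly the shape of the claimed constant $c\,(c-2)\,(2c-1)\binom{2c-1}{2}$. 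Balancing the reservoir size $\approx\frac{n}{c-1}$ against the core size $n$ then yields $N\approx n+\frac{n}{c-1}=\frac{cn}{c-1}$, the main term.

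The main obstacle, and the step that forces both the additive constant and the careful bookkeeping above, is \emph{simultaneity}: the pendant principle requires a single color $i$ to work for all $\binom{n}{2}$ core pairs at once, whereas a priori the dominant color among the pendants of a given pair can vary from pair to pair. One cannot route around this by mere density, since a coloring in which every pair has \emph{some} available color only guarantees that the auxiliary graphs ``pair has a color-$i$ pendant'' cover all pairs, and covering does not produce a large clique. The crux is therefore to bound the number of vertices one must sacrifice in order to freeze a single color for the entire core; I expect this to be the technical heart of the proof, handled by an iterative extraction in which each change of the working color is paid for by deleting a bounded number of vertices, and in which the specific value $2c-3$ of available reservoir slots is used to guarantee that a usable color always survives the process.
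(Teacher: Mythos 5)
Your ``pendant principle'' is correct, and it is a clean substitute for the Hall-type argument behind Lemma~\ref{trivi} (since a pendant hyperedge meets the core in exactly its defining pair, distinctness of the assigned hyperedges is automatic). But that is the easy part, and the rest of the proposal has a genuine gap sitting exactly where you yourself locate ``the technical heart'': nothing in the sketch actually bounds, by a constant, the number of vertices that must be discarded while freezing a single working color. The charging scheme is ill-defined: an obstruction to admitting $v$ into the core is the \emph{absence} of any color-$i$ hyperedge of the form $\{v,w\}\cup S$ with $S\subseteq D$, which is witnessed by all $\binom{|D|}{2c-3}$ such hyperedges carrying other colors; there is no ``single obstructing hyperedge'' to charge a discarded vertex to, and no reason is offered why the working color changes at most $c-2$ times or why each change costs only boundedly many vertices. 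Observing that $c\cdot(c-2)\cdot(2c-1)\cdot\binom{2c-1}{2}$ factors into pieces of plausible-sounding data is pattern-matching on the constant, not an argument.

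The deeper reason the outline cannot be completed as written is that it never uses the hypothesis $r=2c-1$ in a load-bearing way. Your density observation (each pair has $\binom{|D|}{r-2}$ pendant candidates over the reservoir, and some color takes a $1/c$ fraction of them) holds verbatim for every $r\ge 3$ and every $c$, and the discard-bounding step, as you describe it, is likewise insensitive to the relation between $r$ and $c$. If the outline worked as described, it would give $R^c(B^rK_n)\le \frac{cn}{c-1}+O(1)$ for all $r\ge 3$, which is false: for $r=c=3$ the Ramsey number is $\Omega(n^2/\log n)$ by \cite{STWZ}, and Theorem~\ref{konstru} of this paper gives superlinear lower bounds whenever $r=c$. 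You gesture at this issue (``the specific value $2c-3$ \dots is used to guarantee that a usable color always survives''), but give no mechanism. The paper supplies the missing content by a different, global route: call a shadow edge light in color $i$ if it lies in fewer than $\binom{r}{2}$ hyperedges of color $i$, and let $V_i$ be the set of vertices meeting light-$i$ edges. Either some $V_i$ misses $n$ vertices, in which case the complement is a core all of whose pairs are heavy in color $i$ and Lemma~\ref{trivi} wins, or every $|V_i|>N/c+O(1)$; the crux is then that $|V_i\cap V_j|=O(1)$ for $i\ne j$, proved by a counting argument that packs three distinguished vertices together with one light edge from each of the remaining $c-2$ colors into a single hyperedge, which is possible precisely because $3+2(c-2)=2c-1=r$. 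Inclusion-exclusion over the $V_i$ then gives the contradiction. This intersection bound is the $r=2c-1$-specific step your proposal is missing, and without something playing its role the simultaneity problem you correctly identified remains unresolved.
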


We remark that it is not hard to see that the Ramsey number is monotone decreasing in $r$. It follows from the well-known fact that there is an injection from the 2-element subsets of a finite set $X$ to the $r$-element subsets of $X$ (provided $|X|\ge r+2$), such that each 2-set is contained in its image. Also, by definition, the Ramsey number monotone increases in $c$. What happens if both $r$ and $c$ increase? We show that if both increase by 1, the order of magnitude cannot increase.

\begin{thm}\label{thm2} For any graph $G$, $R^c(B^rG)\le r\binom{r}{2} R^{c-1}(B^{r-1}G)+|V(G)|$.

\end{thm}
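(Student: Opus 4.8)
The plan is to reduce the $c$-colored, $r$-uniform problem to the $(c-1)$-colored, $(r-1)$-uniform problem and then lift. Write $N=R^{c-1}(B^{r-1}G)$ and $m=|V(G)|$, and suppose the $r$-edges of the complete hypergraph on $r\binom{r}{2}N+m$ vertices are colored with $c$ colors. The backbone is a pivoting step: if I can find a single \emph{pivot} vertex $v$ and a set $S$ with $v\notin S$, $|S|\ge N$, such that, over all $(r-1)$-subsets $e\subseteq S$, the color of $e\cup\{v\}$ takes only $c-1$ of the $c$ values, then I am done. Indeed, coloring each $e\in\binom{S}{r-1}$ by the color of $e\cup\{v\}$ gives a $(c-1)$-coloring of the $(r-1)$-edges of $S$; since $|S|\ge N=R^{c-1}(B^{r-1}G)$, there is a monochromatic Berge copy of $G$ in some color $i$, consisting of distinct $(r-1)$-edges $f'(xy)\supseteq\{x,y\}$. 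Setting $f(xy)=f'(xy)\cup\{v\}$ produces $r$-edges that all have color $i$, each still contains the pair $\{x,y\}$, and remain pairwise distinct because their $(r-1)$-parts are distinct and none contains $v$; hence they form a monochromatic Berge-$G$ on $r$-edges. This half of the argument is clean.

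The heart of the proof is producing such a pivot, and this is where the number of colors drops. I would single out one color, say color $c$, and reserve a set $U$ of $m$ vertices. On the remaining $r\binom{r}{2}N$ vertices I would look for a vertex $v$ together with an $N$-set $S$ whose $v$-link carries no color-$c$ hyperedge; any such pair feeds the pivoting step with color $c$ eliminated. If no such pair exists, then color $c$ must be abundant, and I would instead assemble a monochromatic Berge-$G$ in color $c$ outright: embed $V(G)$ into $U$ and, for each of the at most $\binom{m}{2}$ pairs spanned by the edges of $G$, choose a distinct color-$c$ hyperedge through that pair greedily. The coefficient $r\binom{r}{2}$ should be exactly what balances the bookkeeping: each $r$-edge meets $\binom{r}{2}$ pairs, and the passage from $r$- to $(r-1)$-uniformity costs a further factor of $r$, while the additive $m$ is the room needed to seat the copy of $G$ on $U$.

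The step I expect to be the main obstacle is making this either/or quantitative with precisely the stated bound: a Hall-type dichotomy for the single color $c$ showing that failure to find a pivot with an $N$-element color-$c$-free set in its link forces, through the pair-counting above, enough color-$c$ hyperedges through each relevant pair in $U$ that the greedy choice of distinct representatives never stalls. The delicate point is that the \emph{same} vertex count $r\binom{r}{2}N+m$ must suffice on both horns — enough to extract the color-$c$-free link on one side, and enough color-$c$ density through pairs on the other — so a wasteful estimate on either side alone would not close the gap. Finally I would record the base case $c=1$ (or $r=2$), where a single-colored complete hypergraph trivially contains a Berge-$G$ once there are enough vertices, at which the recursion terminates.
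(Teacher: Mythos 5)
Your lifting step is correct and is in fact the same reduction idea the paper uses: a monochromatic Berge-$G$ in an auxiliary $(c-1)$-colored $(r-1)$-uniform hypergraph lifts to the original hypergraph by attaching pivot vertices, with distinctness of the lifted $r$-edges coming for free. But the heart of your argument --- the dichotomy that is supposed to produce the pivot --- is false. ``No vertex $v$ has an $N$-set $S$ whose $v$-link avoids color $c$'' is a statement about links of \emph{single} vertices, and it simply does not force color $c$ to pass through \emph{pairs} of vertices, which is what your greedy construction of a Berge-$G$ inside $U$ requires. Concretely, partition the vertex set into $k$ classes, each of size less than $N/(r-1)$, where $k$ can be taken to be roughly $r(r-1)\binom{r}{2}$, a constant depending only on $r$; let color $c$ consist of exactly those hyperedges meeting every class in at most one vertex, and distribute all other hyperedges among colors $1,\dots,c-1$ arbitrarily. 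Any set $S$ of $N$ vertices meets at least $r$ classes, hence at least $r-1$ classes other than that of any given vertex $v$; choosing one vertex of $S$ from each of those classes gives an $(r-1)$-set $e\subseteq S$ with $e\cup\{v\}$ of color $c$. So no pivot for color $c$ exists anywhere. On the other hand, no color-$c$ hyperedge contains two vertices from the same class, so for $G=K_n$ (the case the theorem must in particular cover) the $n$ core vertices of a color-$c$ Berge-$K_n$ would have to lie in pairwise distinct classes --- impossible once $n>k$. Hence the greedy inside $U$ stalls for \emph{every} choice of $U$ (any $n$-set contains two vertices of one class), and indeed no color-$c$ Berge-$K_n$ exists at all, so no repair that keeps color $c$ as the target can succeed. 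Both horns fail simultaneously; and since you give no criterion for which color to single out, nor any argument that some color must satisfy the dichotomy, the proof does not close.

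What the paper does instead is precisely designed to avoid demanding a single global pivot. It works with the heavy/light dichotomy of Lemma \ref{trivi}: an edge of the shadow graph is light in color $1$ if it lies in fewer than $\binom{r}{2}$ color-$1$ hyperedges. Either at some stage the remaining at least $|V(G)|$ vertices span only heavy edges, and Lemma \ref{trivi} yields a color-$1$ Berge-$G$; or one can repeatedly choose a light edge $u_iv_i$ and delete the at most $\left(\binom{r}{2}-1\right)(r-2)$ vertices that lie in color-$1$ hyperedges together with $u_i,v_i$, collecting $m=R^{c-1}(B^{r-1}G)$ vertices $v_1,\dots,v_m$ (each round removes at most $r\binom{r}{2}$ vertices, which is where the coefficient comes from). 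The auxiliary $(r-1)$-edge $\{v_{i_1},\dots,v_{i_{r-1}}\}$ with $i_1<\dots<i_{r-1}$ is then colored by the color of its lift through its \emph{own} pivot $u_{i_1}$: lightness of $u_{i_1}v_{i_1}$ plus the deletions guarantee this color is never $1$, so only $c-1$ colors occur, and the lifted edges stay pairwise distinct because the pivot is a function of the least index. In short, the color count drops not because some vertex has a color-free link (which need not exist), but because each light pair certifies, locally, that color $1$ cannot appear on its lifts; this per-hyperedge pivoting is the idea your proposal is missing.
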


It was shown in \cite{gmov} that if $r>c$, then the Ramsey number is linear in the number of vertices $n$. However, for $r=c=3$, \cite{STWZ} showed a super-linear lower bound $\Omega(n^2/\log n)$. Moreover, they showed for every $r$ a super-linear lower bound, in case $c$ is large enough. Note that their bounds are sub-quadratic. For the case $r=3$, a polynomial lower bound with degree increasing with $c$ was given in \cite{gmov}. Here we give a similar bound for any $r$.

\begin{thm}\label{affin} 
\textbf{(i)} Let $c>\binom{r}{2}$ and $n$ be large enough. 
Then $R^c(B^rK_n)\ge (n-1)^2+1$.

\textbf{(ii)} Let $c>(d-1)\binom{r}{2}$. Then $R^c(B^rK_n)= \Omega(n^d)$.
\end{thm}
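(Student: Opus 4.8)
The plan is to prove both lower bounds by exhibiting, for each relevant value of $c$, an explicit $c$-coloring of the complete $r$-uniform hypergraph on the target number of vertices with no monochromatic Berge copy of $K_n$; a coloring on $N$ vertices gives $R^c(B^rK_n)\ge N+1$. Since $R^c(B^rK_n)$ is monotone increasing in $c$, it suffices to treat the smallest admissible number of colors, namely $c=(d-1)\binom r2+1$ for part (ii) (and its $d=2$ instance $c=\binom r2+1$ for part (i)); every larger $c$ then follows automatically.

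The key reduction is a clique criterion. Given a coloring of the $r$-edges, let $G_i$ be the graph on the vertex set whose edges are the pairs $\{u,v\}$ covered by at least one $r$-edge of color $i$ (the color-$i$ $2$-shadow). If some color class contained a Berge-$K_n$ with core $S$, then every pair inside $S$ would lie in a color-$i$ hyperedge, so $S$ would be a clique of size $n$ in $G_i$. Hence it is enough to build a coloring in which every $G_i$ is $K_n$-free.

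To build such a coloring I would place the vertices on an affine space and color each $r$-edge by a suitable partition. Take the vertex set to be $\mathbb{F}_q^d$ for a prime power $q\le n-1$, and choose directions $a_1,\dots,a_c\in\mathbb{F}_q^d$ in general position, i.e.\ any $d$ of them linearly independent; a normal rational curve supplies $q+1$ such directions, which is at least $c$ once $n$ is large. Each $a_j$ gives the partition $\Pi_j$ of $\mathbb{F}_q^d$ into the fibres of $v\mapsto a_j\cdot v$, a partition into exactly $q\le n-1$ classes. I then color an $r$-edge $e$ by an index $j$ for which $e$ is rainbow in $\Pi_j$ (all $r$ vertices in distinct classes). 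This is always possible: two vertices share a class of $\Pi_j$ exactly when $a_j$ is orthogonal to their difference, and general position forces each hyperplane to contain at most $d-1$ of the $a_j$; thus each of the $\binom r2$ pairs of $e$ is blocked in at most $d-1$ of the partitions, so at most $(d-1)\binom r2=c-1$ partitions fail to be rainbow for $e$, leaving at least one that works. Because an $r$-edge colored $j$ is rainbow in $\Pi_j$, all its pairs join distinct classes, so $G_j$ is contained in the complete multipartite graph of $\Pi_j$, whose clique number equals its number of classes $q\le n-1$; hence every $G_j$ is $K_n$-free. Taking $q$ to be a prime power in $[(n-1)/2,\,n-1]$ gives $N=q^d=\Omega(n^d)$, proving part (ii).

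The main obstacle is the tension between the two demands on a color class: $K_n$-freeness of $G_i$ forces its defining partition to have at most $n-1$ classes (bounding the multipartite clique number), whereas the rainbow/perfect-hash property needed to color every $r$-edge pushes toward finely separating partitions. The general-position (arc) choice of directions is exactly what reconciles these, and the bookkeeping ``each pair is blocked at most $d-1$ times, and there are $\binom r2$ pairs'' is what pins the threshold to $(d-1)\binom r2+1$. For the exact value $(n-1)^2+1$ in part (i) one cannot afford to replace $n-1$ by a smaller prime power $q$, so in place of field directions I would use $\binom r2+1$ pairwise orthogonal parallel classes of the genuine $(n-1)\times(n-1)$ grid, equivalently $\binom r2-1$ mutually orthogonal Latin squares of order $n-1$; here ``orthogonal'' means any two points share a class in at most one of the partitions, which is precisely the $d-1=1$ blocking bound. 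These exist once $n$ is large, since the number of mutually orthogonal Latin squares of order $m$ tends to infinity as $m\to\infty$ while $\binom r2-1$ is a fixed constant. The same rainbow argument then yields $\binom r2+1$ color classes with $K_n$-free $2$-shadows on exactly $(n-1)^2$ vertices, hence $R^c(B^rK_n)\ge (n-1)^2+1$.
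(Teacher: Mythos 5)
Your proof is correct and follows essentially the same route as the paper: for part (ii) you color $\mathbb{F}_q^d$ (with $q$ a prime power in $[(n-1)/2,\,n-1]$) using parallel classes of hyperplanes so that every $r$-edge is rainbow in the class of its own color, forcing each color's $2$-shadow to be $(n-1)$-partite, and for part (i) you replace the affine structure by a net, i.e.\ mutually orthogonal Latin squares of order $n-1$, exactly as the paper does with its $c$-net. The only notable difference is that you make explicit the general-position choice of directions (via a normal rational curve) guaranteeing that each pair blocks at most $d-1$ of the chosen parallel classes, a point the paper's proof leaves implicit.
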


In case $r=3$, this bound with $d=2$ and $c=4$ improves the best known lower bound $\Omega(n^2/\log n)$ to $\Omega(n^2)$. In \cite{gmov} a lower bound $R^c(B^3K_n)= \Omega\left(\frac{n^{\lceil\frac{c}{2}\rceil}}{(\log n)^{\lceil c/2\rceil-1}}\right)$ was given for every $c$. The proof was by induction on $c$ from $i$ to $i+2$, hence our improvement on $c=4$ gives an improvement for every even $c\ge 4$.

The above result leaves open the cases $r\le c\le \binom{r}{2}$. Here we show that the Ramsey number of the Berge clique is super-linear in these cases.

\begin{thm}\label{konstru} $R^r(B^rK_n)=\Omega(n^{1+1/(r-2)}/\log n)$.

\end{thm}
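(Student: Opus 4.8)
The plan is to reduce the problem to constructing an auxiliary $r$-colouring of the \emph{pairs} of $[N]$. Concretely, I would look for a map $\mu\colon\binom{[N]}{2}\to[r]$ with two properties: \textbf{(a)} every $r$-set $e\subseteq[N]$ \emph{misses} a colour, i.e. $\{\mu(p):p\in\binom{e}{2}\}\neq[r]$; and \textbf{(b)} every $n$-set $S\subseteq[N]$ \emph{sees} every colour, i.e. $\{\mu(p):p\in\binom{S}{2}\}=[r]$. Given such a $\mu$, I colour each hyperedge $e$ by an arbitrary colour $i(e)\in[r]\setminus\{\mu(p):p\in\binom{e}{2}\}$, which is nonempty by (a). Then every colour-$i$ hyperedge has all $\binom{r}{2}$ of its pairs avoiding colour $i$ under $\mu$, so the $2$-shadow of colour class $i$ lies inside the graph $G_i:=\{p:\mu(p)\neq i\}$. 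By (b) each $n$-set contains a pair of colour $i$, hence is not a clique of $G_i$, so $G_i$ is $K_n$-free. Since any Berge-$K_n$ needs each of the $\binom{n}{2}$ pairs of its core to be covered by a hyperedge of the same colour, a monochromatic Berge-$K_n$ would force an $n$-clique in some $G_i$; thus the colouring has none. This reduces Theorem~\ref{konstru} to building $\mu$ on as many vertices as possible.

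To build $\mu$ I would use a hierarchical, Gallai-type colouring read off a rooted tree of depth $r-1$: the root has $m$ children (the \emph{super-vertices}), every node at depth $1\le j\le r-2$ has $b_j$ children, and the $N=m\prod_{j=1}^{r-2}b_j$ leaves are the vertices. I colour a pair of leaves whose lowest common ancestor sits at depth $j\in\{1,\dots,r-2\}$ with the structural colour $j$, and a pair whose common ancestor is the root (endpoints in distinct super-vertices $a,b$) with $\beta(a,b)\in\{r-1,r\}$, where $\beta$ is a $2$-colouring of $K_m$ chosen by the usual probabilistic bound to have no monochromatic clique of size $s=\lceil 2\log_2 m\rceil+1$. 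This uses exactly $r$ colours: the $r-2$ structural ones and the two base ones.

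Verifying (a) is the combinatorial heart. If the $r$ leaves occupy $t$ super-vertices, then (since leaves in one subtree realise at most one fewer structural colour than their number) they realise at most $r-t$ structural colours, capped at the $r-2$ that exist, while the base colours among them number at most $\min\{2,\binom{t}{2}\}$. For $t=1$ this gives at most $r-2$ colours; for $t=2$, at most $(r-2)+1=r-1$; for $t\ge 3$, at most $(r-t)+2\le r-1$; so a colour is always missed. For (b), I would force each colour by a counting bound. A set avoiding structural colour $j$ must, under every depth-$j$ node, stay within a single child, so it has at most $N/b_j$ leaves; hence $b_j>N/n$ forces colour $j$. A single super-vertex holds only $G:=\prod_j b_j$ leaves, so once $G<n$ every $n$-set spans at least two super-vertices and sees a base colour; and since a $\beta$-monochromatic family of super-vertices carries at most $sG$ leaves, the stronger bound $sG<n$ forces the super-vertices of any $n$-set to be non-$\beta$-monochromatic, so both base colours appear.

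Finally I would optimise: taking all $b_j\approx N/n$ gives $G\approx(N/n)^{r-2}$, and the requirement $sG<n$ becomes $N^{r-2}\lesssim n^{r-1}/s$, i.e. $N=\Omega\big(n^{1+1/(r-2)}/s^{1/(r-2)}\big)$; with $m=N/G\approx n^{1/(r-2)}$ and hence $s=\Theta(\log n)$, and since $1/(r-2)\le 1$, this is at least $c\,n^{1+1/(r-2)}/\log n$. The main obstacle is reconciling (a) and (b): property (a) forces $\mu$ to be essentially ultrametric in its structural colours (at most $r-1$ divergence levels), while (b) insists every colour be unavoidable on $n$-sets; the tension between ``few levels'' and ``all colours forced'' is exactly what fixes the branching factors $b_j$ and yields the exponent $1+1/(r-2)$, and the $\log n$ loss is the price of the single level---the root---where two colours must coexist through the Ramsey colouring $\beta$.
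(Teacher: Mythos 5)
Your proof is correct, but it takes a genuinely different route from the paper. The paper deduces Theorem~\ref{konstru} from a stronger inductive statement (Theorem~\ref{konstru2}): by induction on $r$ it assigns to \emph{every} set $T$ of size $t\le r$ a set $S(T)$ of $r-t+1$ colours satisfying monotonicity ($\bullet_1$), an extension/counting property ($\bullet_2$), and $K_n$-freeness of each colour graph ($\bullet_3$); the induction step glues together copies of the $(r-1)$-uniform construction, and the randomness (Erd\H{o}s's $2$-colouring) sits at the \emph{bottom} of the recursion, inside the smallest blocks. Your construction is a direct, non-inductive one: you only colour pairs (your $\mu$ is exactly the ``missing colour'' of the paper's $S$ on pairs, since $|S(T)|=r-1$ for $|T|=2$), your conditions (a) and (b) are precisely the translations of $\bullet_1$ restricted to $r$-sets over pairs and of $\bullet_3$, and you dispense entirely with $\bullet_2$, which the paper needs only to drive the induction. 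Moreover, your hierarchy is the mirror image of the paper's unrolled recursion: you place the Erd\H{o}s colouring at the \emph{top} (on the $m$ super-vertices), with deterministic LCA-depth colours below, whereas the paper has deterministic ``new colours'' at every level above random leaves. Both yield the theorem; your version is shorter and more self-contained, and in fact gives the slightly stronger bound $\Omega\bigl(n^{1+1/(r-2)}/(\log n)^{1/(r-2)}\bigr)$ (consistent with the paper's remark that the $\log$ factor can be improved), while the paper's framework is designed to be reusable as an induction hypothesis across uniformities. Two small points to tidy up: the monochromatic-clique bound for $\beta$ gives cliques of size at most $s-1$, so state the requirement as $(s-1)G<n$ (your $sG<n$ is stronger, so nothing breaks); and since $N$ need not factor exactly as $m\prod_j b_j$, you should note that properties (a) and (b) survive passing to an arbitrary subset of the leaves, so rounding the parameters costs only a constant factor.
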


We present the proofs of the above theorems in Section 2 and finish the paper with some concluding remarks in Section 3.

\section{Proofs}

Given a colored complete $r$-uniform hypergraph on $N$ vertices, we will consider the complete graph on the same vertex set and call it the (colored) shadow graph. We will say that an edge is \textit{light in color $i$} if it is contained in less than $\binom{r}{2}$ hyperedges of color $i$, and \textit{heavy in color $i$} otherwise. Let $E_i$ denote the set of edges that are light in color $i$. We will use the following simple lemma from \cite{gmov}.

\begin{lemma}\label{trivi} If there is a copy of $G$ in the shadow graph such that all of its edges are heavy in color $i$, then there is a Berge-$G$ of color $i$ in the complete hypergraph.

\end{lemma}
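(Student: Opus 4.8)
The plan is to recast the existence of the desired Berge-$G$ as the existence of a left-saturating matching in an auxiliary bipartite graph, and to produce that matching by Hall's theorem via a one-line double count.

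First I would fix a copy of $G$ in the shadow graph all of whose edges are heavy in color $i$, and build a bipartite graph $B$ as follows. One side $X$ consists of the edges of this copy of $G$ (viewed as $2$-element vertex subsets), and the other side $Y$ consists of the hyperedges of the complete hypergraph that have color $i$. I join $e\in X$ to $H\in Y$ precisely when $e\subset H$. A matching of $B$ that saturates $X$ is exactly a choice, for each edge $e$ of the copy of $G$, of a distinct color-$i$ hyperedge $f(e)\supset e$. The resulting hypergraph $\cH$ with hyperedge set $\{f(e): e\in E(G)\}$ is then a Berge-$G$ of color $i$: injectivity of the matching makes $f$ a bijection onto $E(\cH)$, the condition $e\subset f(e)$ holds by construction, and we may take $V(\cH)$ to contain the vertices of the copy of $G$, so that $V(G)\subset V(\cH)$.

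Next I would bound the degrees in $B$ from the two sides. Every $e\in X$ is heavy in color $i$, so by definition it lies in at least $\binom{r}{2}$ hyperedges of color $i$; hence each left vertex has degree at least $\binom{r}{2}$. Conversely, a single $r$-edge $H$ contains exactly $\binom{r}{2}$ pairs of vertices, so it can contain at most $\binom{r}{2}$ edges of the copy of $G$; hence each right vertex has degree at most $\binom{r}{2}$.

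Writing $d:=\binom{r}{2}$, I would then verify Hall's condition for $X$ by double counting. For any $W\subseteq X$, the number of edges of $B$ incident to $W$ is at least $d\lvert W\rvert$ by the left-degree bound; every such edge lands in $N(W)$, and the number of edges of $B$ incident to $N(W)$ is at most $d\lvert N(W)\rvert$ by the right-degree bound, so $d\lvert W\rvert\le d\lvert N(W)\rvert$, and since $d\ge 1$ this forces $\lvert N(W)\rvert\ge\lvert W\rvert$. Hall's theorem then yields a matching saturating $X$, which completes the proof. The argument is entirely routine once set up this way; the only point worth flagging is that the threshold $\binom{r}{2}$ in the definition of \emph{heavy} is precisely the maximum possible right-degree, which is exactly what makes the ``minimum left-degree $\ge$ maximum right-degree'' matching criterion applicable, so there is no substantial obstacle.
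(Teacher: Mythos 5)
Your proof is correct and is essentially the standard argument for this lemma: the paper itself states Lemma \ref{trivi} without proof, citing \cite{gmov}, and the proof there is the same Hall-type matching argument, matching each heavy edge of the copy of $G$ to a distinct color-$i$ hyperedge containing it, using that the minimum degree $\binom{r}{2}$ on the edge side meets the maximum degree $\binom{r}{2}$ on the hyperedge side. Your double-counting verification of Hall's condition is exactly the right (and routine) way to make this precise, so there is nothing to add.
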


Let $V_i$ denote the set of vertices incident to at least one edge of $E_i$. Now we are ready to prove 
Theorem \ref{thm1}. Recall that it claims that $R^c(B^{2c-1}K_n)\le\frac{cn}{c-1}+c(2c-1)(c-2)\binom{2c-1}{2}$ if $n$ is large enough.

\begin{proof}[Proof of Theorem \ref{thm1}] We let $r=2c-1$, and we will refer to it as $r$ whenever we talk about uniformity, or about a constant arising from the uniformity. For constants arising from the uniformity, we use $c$. We hope it helps follow the argument.

Let us consider a $c$-coloring of the complete $r$-uniform hypergraph on $N$ vertices without a monochromatic Berge-$K_n$, and assume $N>\frac{cn}{c-1}+cr(c-2)\binom{r}{2}$. First we show $|V_i|>\frac{N}{c}+(c-1)r(c-2)\binom{r}{2}$ for every $i\le c$. Indeed, otherwise we can consider the other $N-|V_i|\ge n$ vertices. Every edge inside that set is heavy in color $i$, thus we can find a monochromatic Berge-$K_n$ there, using Lemma \ref{trivi}. Note that it also implies $|E_i|>N/2c$.

Next we will show that $|V_c\cap V_{c-1}|\le r(c-2)\binom{r}{2}$. Assume otherwise and pick $x=r(c-2)\binom{r}{2}+1$ vertices $v_1,\dots,v_x$ from the intersection, and for each $v_i$ let $u_iv_i\in E_{c-1}$ and $w_iv_i\in E_c$ arbitrary. Let us consider the following colored hypergraph $\cH$. It consists of the $r$-edges that contain $u_i,v_i,w_i$ for some $i\le x$, and contain an edge from every $E_i$. Note that the union of $u_i,v_i,w_i$ and an edge from every $E_i$ has size at most $r$, thus there is a hyperedge in $\cH$ containing them.

%Let us pick an arbitrary edge $e_1\in E_1$. It is of color 1 in at most $\binom{r}{2}$ hyperedges of $\cH$. Each of those hyperedges contain at most $\binom{r}{2}$ edges if $E_2$, thus we can find an edge $e_2\in E_2$ such that hyperedges containing both $e_1$ and $e_2$ are not of color 1. Similarly, we will pick an edge from every $E_i$ ($i\le c-2$) such that the hyperedges containing it and the previously picked edges are not of color $1,2,\dots i-1$. This is doable, as we have to avoid at most $(i-1)\binom{r}{2}$ hyperedges of those colors, and they each contain at most $\binom{r}{2}$ edges of $E_i$. Finally, the hyperedges of $\cH$ that contain each of $e_1,e_2,\dots,e_{c-2}$, also contain $u_i,v_i,w_i$ for some $i\le x$.

Let us pick an arbitrary edge $e_j\in E_j$ for every $j\le c-2$. Consider the hyperedges of $\cH$ containing each of them. At most $(c-2)\binom{r}{2}$ of them has color $1,2,\dots, c-2$. Each of those contains $u_i,v_i,w_i$ for at most $r$ different $i\le x$ (as the vertices $v_i$ are pairwise distinct, each hyperedge contains $v_i$ for at most $r$ different $i\le x$). 
As $x>r(c-2)\binom{r}{2}$, there is a hyperedge of $\cH$ of color $c-1$ or $c$ that contains $e_1$ and $u_i,v_i,w_i$ for some $i$. 

Let us consider now the subhypergraph $\cH'$ of $\cH$ consisting of hyperedges of color $c-1$ or $c$ that contain $u_i,v_i,w_i$ for some $i$. By the above, for every $e_1\in E_1$ there is a hyperedge in $\cH'$ containing it. As a hyperedge contains at most $\binom{r}{2}$ edges of $E_1$, it means $\cH'$ contains at least $|E_1|/\binom{r}{2}$ hyperedges. On the other hand, there are at most $2\binom{r}{2}$ hyperedges of color $c-1$ or $c$ for every $i\le x$, showing $|E_1|/\binom{r}{2}\le 2x\binom{r}{2}$, a contradiction with $n$ (thus $N$) being large enough.

We obtained that $|V_i\cap V_j|\le r(c-2)\binom{r}{2}$ for any $i<j\le c$, by symmetry.
Now we have $N\ge \sum_{i=1}^c |V_i|-\sum_{i<j\le c}|V_i\cap V_j|$. This implies $\sum_{i=1}^c |V_i|\le N+\binom{c}{2}r(c-2)\binom{r}{2}$, hence there is an $i\le c$ with $|V_i|\le \frac{N}{c}+(c-1)r(c-2)\binom{r}{2}$. This is a contradiction, finishing the proof.

\end{proof}

We continue with the proof of Theorem \ref{thm2}, which states $R^c(B^rG)\le r\binom{r}{2} R^{c-1}(B^{r-1}G)+|V(G)|$.

\begin{proof}[Proof of Theorem \ref{thm2}] Let us consider a $c$-coloring of the complete $r$-uniform hypergraph on $N=R^c(B^rG)-1$ vertices without a monochromatic Berge-$K_n$. 

Let us consider an edge $u_1v_1\in E_1$ and let $U_1$ be the set of vertices contained in hyperedges of color 1 together with $u_1$ and $v_1$. Thus we have $|U_1|\le \left(\binom{r}{2}-1\right)(r-2)$. Let us delete $u_1$, $v_1$ and $U_1$, and pick $u_2v_2\in E_1$ from the remaining set of vertices. In general, after deleting $u_1,\dots,u_i$, $v_1,\dots,v_i$, $U_1,\dots,U_i$, we pick $u_{i+1}v_{i+1}\in E_1$ and let $U_{i+1}$ be the set of at most $(\binom{r}{2}-1)(r-2)$ vertices contained in hyperedges of color 1 together with $u_{i+1}$ and $v_{i+1}$. As each time we delete at most $(\binom{r}{2}-1)(r-2)+2$ vertices, we can pick $v_m$ with $m=R^{c-1}(B^{r-1}G)$. 

Let us consider the $(r-1)$-uniform complete hypergraph with vertex set $V=\{v_1,\dots,v_m\}$. We will give a $(c-1)$-coloring of it. Let $R=\{v_{i_1},\dots,v_{i_{r-1}}\}$ be a hyperedge of it with $i_1< i_2<\dots<i_{r-1}$. Then the color of this hyperedge should be the color of the $r$-edge $R\cup \{u_{i_1}\}$ in the original hyperedge. As we deleted $U_{i_1}$ before picking the other vertices, this color is not $1$. Hence we colored our $(r-1)$-uniform complete hypergraph on $m=R^{c-1}(B^{r-1}G)$ vertices with $c-1$ colors.

By definition, there is a monochromatic Berge-$G$ in this hypergraph. Each hyperedge of it inherited its color from an $r$-edge of the original hypergraph containing it, and those $r$-edges are pairwise distinct. This implies those $r$-sets form a monochromatic Berge-$G$ in the original complete $r$-uniform hypergraph, a contradiction.

%Let $V_i'$ denote the set of vertices incident to at least $a$ edges of $E_i$.

%Assume first $|V_i'|<\alpha N$ for some $i\le c$. Similarly to the beginning of the proof of Theorem \ref{thm1}, we can consider the other $N-|V_i'|$ vertices. The edges of $E_i$ form a graph with maximum degree at most $a-1$ on them, thus there is an independent set of size $(N-|V_i'|)/a$. There is a Berge-clique of color $i$ on them using Lemma \ref{trivi}, showing $(N-\alpha N)/a<n$, thus $N=O(n)$, which would finish the proof.

%Hence we can assume $|V_i'|\ge \alpha N$ for every $i\le c$, thus $N_\alpha cN$ vertices are in every $V_i'$. Let $v$ be such a vertex, and $vv_1\in E_1$. There are at most $\binom{r}{2}$ hyperedges of color 1 containing $v$ and $v_1$, they contain at most $(r-2)\binom{r}{2}$ other vertices. Thus we can pick $v_2$ such that $v,v_1,v_2$ are not contained in any hyperedge of color 1 and $vv_2\in E_1$. We similarly pick $v_i$ for every $i\le R^{c-1}(B^{r-1}G)$ such that $vv_i\in E_1$ and no hyperedge of color 1 contains $v$ and two different $v_i$'s. It is doable, as for every $i$, the previously picked vertices with $v$ form $i-1$ edges of $E_i$, thus at most $(i-1)\binom{r}{2}$ hyperedges of color 1 contain one of them with $v$, and they forbid at most $(r-2)(i-1)\binom{r}{2}$ vertices.

%hmm, nem is hasznaltuk hogy $v$ minden $V_i$-ben

%kvadratikus sok el $E_i$-kben, van pont amibol lin sok $N$-ben, uabbol a szinbol, arra ez. Ja de nincs kvadr, mert nem veges a klikk amit keresunk hanem $N$-nel no.

\end{proof}

Before our next proof, we need to define \textit{$k$-nets}. A $k$-net of order $n$ is an incidence structure with $n^2$ vertices and $nk$ sets of size $n$, called lines, such that two lines share at most one vertex and there are $k$ ways to partition the vertex set into $n$ parallel (i.e. disjoint) lines. It is well-known that there exists a $k$-net of order $n$ if $n>n_0$. The best known upper bound on $n_0$ is roughly $k^{14.3}$ \cite{Lu} (note that a $k$-net is equivalent to a set of $l-2$ mutually orthogonal latin squares).

Now we are ready to prove Theorem \ref{affin}, that we restate here for convenience.
\begin{thm*} 
\textbf{(i)} Let $c>\binom{r}{2}$ and $n$ be large enough. 
Then $R^c(B^rK_n)\ge (n-1)^2+1$.

\textbf{(ii)} Let $c>(d-1)\binom{r}{2}$. Then $R^c(B^rK_n)= \Omega(n^d)$.
\end{thm*}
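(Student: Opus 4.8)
The plan is to produce, for each regime, an explicit $c$-colouring of the $r$-edges of the complete hypergraph on the claimed number of vertices that contains no monochromatic Berge-$K_n$. Everything rests on one simple necessary condition. Given a colouring, let $G_i$ be the graph on the same vertex set whose edges are exactly the pairs $\{x,y\}$ covered by at least one $r$-edge of colour $i$. If there is a Berge-$K_n$ of colour $i$ with core $S$, then each of the $\binom{n}{2}$ pairs of $S$ lies in its colour-$i$ image hyperedge, so $S$ spans a $K_n$ in $G_i$. Hence it suffices to colour so that every shadow $G_i$ is $K_n$-free; then no colour can host a Berge-$K_n$. The task thus becomes purely graph-theoretic: colour all $r$-edges so that each $G_i$ has clique number at most $n-1$, on as many vertices as possible. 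Note that, since the hypergraph is complete, every pair is covered by some colour, so the $G_i$ together exhaust all pairs; the whole difficulty is keeping each one individually clique-bounded.

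For the target structure I would use a grid, or net, of order $m:=n-1$. On $[m]^d$ the natural $K_n$-free graphs are the \emph{first-differing-coordinate} graphs: colour a pair by the least coordinate in which its endpoints differ. A clique in the colour-$t$ graph is constant on coordinates $1,\dots,t-1$ and injective on coordinate $t$, so it has at most $m=n-1$ vertices, and every pair receives a colour. For $r=2$ this already gives part (ii) with $d$ colours (matching $c>(d-1)\cdot 1$), and part (i) is the case $d=2$. To realise the extra clique-bounded ``directions'' that the higher-uniformity case needs, I would take a $k$-net of order $m$ for a suitable constant $k=k(r,d)$: its parallel classes supply, beyond the $d$ coordinate axes, the additional families of disjoint lines (each a $K_m$) that are used below to absorb fused pairs. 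Since $c$ and $r$ are fixed and $n\to\infty$, the stated bound $n_0\approx k^{14.3}$ guarantees such a net exists for all large $n$.

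The difficulty — and the reason the clean $r=2$ count degrades by a factor $\binom{r}{2}$ per coordinate — is \emph{fusion}: assigning an $r$-edge the single colour $i$ forces all $\binom{r}{2}$ of its pairs into $G_i$ simultaneously, so an $r$-edge whose pairs straddle a coordinate boundary threatens to contaminate a shadow with pairs that inflate its clique number. I would resolve this by spending, at each of the $d-1$ coordinate transitions, $\binom{r}{2}$ colours rather than one, using the extra room to route each $r$-edge according to which of its $\binom{r}{2}$ pair-slots (in a fixed ordering) first leaves the current coordinate block, so that its contaminating within-block pairs are separated from its cross-block pairs and each resulting shadow lands inside a single clique-bounded net direction. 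This yields total budget $1+(d-1)\binom{r}{2}$, exactly the threshold $c>(d-1)\binom{r}{2}$, on $[m]^d$ with $m=n-1$, giving $(n-1)^d=\Omega(n^d)$ vertices; specialising to $d=2$ produces the bound $(n-1)^2+1$ of part (i).

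I expect the verification that these extra $\binom{r}{2}$ colours genuinely keep every shadow $K_n$-free to be the main obstacle, since it amounts to checking, by a case analysis of how within-block and cross-block pairs of same-coloured $r$-edges can overlap, that no colour's shadow acquires a clique larger than a single line. A convenient way to organise part (ii) is by induction on $d$: build the dimension-$d$ construction on $[m]\times X'$ from a dimension-$(d-1)$ one on $X'$, re-using its $(d-2)\binom{r}{2}+1$ colours on the $r$-edges contained in a single $[m]$-slice (whose projections to $X'$ inherit the old colouring) and spending the $\binom{r}{2}$ fresh colours on the slice-crossing $r$-edges. The inductive hypothesis transfers cleanly to the old shadows, because a cross-slice pair is never old-coloured, so a clique in an old $G_i$ lies in one slice and hence has at most $n-1$ vertices; the fresh-colour analysis sketched above is exactly what is needed to bound the new shadows, and the base case $d=1$ (a single slice, one colour, $n-1$ vertices) is trivial.
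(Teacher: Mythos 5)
Your reduction to shadow graphs is correct and matches the paper's implicit argument: a monochromatic Berge-$K_n$ in colour $i$ forces a $K_n$ in the graph $G_i$ of pairs covered by colour-$i$ hyperedges, so it suffices to keep every $G_i$ clique-bounded. Your choice of target structure (a $k$-net of order $n-1$, resp.\ a $d$-dimensional grid/affine space) is also the right one. But there is a genuine gap at the crux of the argument, namely the rule assigning a single colour to each $r$-edge. You write that the routing should be arranged ``so that its contaminating within-block pairs are separated from its cross-block pairs'' --- this is impossible: an $r$-edge receives one colour, and \emph{all} $\binom{r}{2}$ of its pairs enter that one shadow; no routing can split the pairs of a single hyperedge across different shadows. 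Moreover, the concrete rule you propose (index the fresh colours by pair-slots and colour an $r$-edge by the first pair-slot that crosses a slice boundary) provably fails. Take $d=2$, $r=3$, slices indexed by the first coordinate, vertices sorted by slice: every triple with two vertices $u,v$ in a slice $a$ and one vertex $w$ in a later slice has its first cross-slice pair in slot $\{1,3\}$, hence they all get the same fresh colour. That shadow then contains every pair inside slice $a$ (witnessed by $\{u,v,w\}$) and every pair $\{u,w\}$ from slice $a$ to a later slice (witnessed by $\{u,v,w\}$ for any other $v$ in slice $a$), so a full slice together with one later vertex is a clique of size $n$ in that shadow.

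The idea your proposal is missing, and which the paper uses, is that the colour of an $r$-set must be chosen \emph{adaptively from its geometry}, with all $c$ colours treated symmetrically rather than reserved for within-slice versus cross-slice edges. Each colour corresponds to a parallel class of lines (part (i), via a $c$-net) or of hyperplanes (part (ii), via $AG(d,q)$); the $\binom{r}{2}$ pairs of an $r$-set $R$ lie on at most $\binom{r}{2}$ lines, hence ``block'' at most $\binom{r}{2}$ classes (at most $(d-1)\binom{r}{2}$ in the hyperplane case), so when $c$ exceeds this number there is a class meeting $R$ in at most one point per line/hyperplane, and $R$ is given that colour. Then a colour-$i$ hyperedge never covers a pair lying on a common line of class $i$, so each shadow $G_i$ has at most one vertex per line of its class and clique number at most $n-1$ --- contamination never arises, instead of having to be ``absorbed.'' This single observation replaces both your fresh-colour case analysis and your induction on $d$ (which, as set up, also forces $(1,1,\dots,1)$-split edges into fresh colours that may all be blocked for them).
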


\begin{proof}[Proof of Theorem \ref{affin}] 

To prove \textbf{(i)}, consider a $c$-net of order $n-1$. 
For each color, we assign a class of parallel lines. Then for each $r$-set $R$, the $\binom{r}{2}$ pairs of vertices contained in $R$ define at most $\binom{r}{2}$ lines, thus there are at most $\binom{r}{2}$ classes of parallel lines such that two vertices in $R$ belong to one of the lines. Thus there is at least one color, say blue, such that each blue line contains only one vertex from $R$. We color $R$ blue (in case there are more available colors, we pick arbitrarily).

That means every blue hyperedge contains at most one vertex from a blue line, thus every monoblue Berge-clique contains at most one vertex from each blue line. Hence the largest monoblue clique has size at most $n-1$, and this holds for every color, finishing the proof.

To prove \textbf{(ii)}, assume first that $n-1$ is a prime power and consider an affine space of order $n-1$ and dimension $d$. For each color, we assign a class of parallel hyperplanes. Two points of an $r$-set $R$ define a line, thus they are shared by $d-1$ hyperplanes. Hence there are at most $(d-1)\binom{r}{2}$ classes of parallel hyperplanes such that two vertices in $R$ belong to one of those hyperplanes. Similarly to \textbf{(i)}, we can pick an (arbitrary) color not assigned to those classes, and obtain that a monochromatic clique shares at most one vertex with each hyperplanes in the corresponding class, hence the monochromatic clique has at most $n-1$ vertices.

This gives the lower bound $(n-1)^d+1$ if a $d$-dimensional affine space of order $n-1$ exists, in particular if $n-1$ is a prime power. It is well-known that there is a prime $p$ with $(n-1)/2\le p \le n-1$, using $p$ as the order of the affine space we obtain the lower bound $(\frac{n-1}{2})^d=\Omega(n^d)$.

\end{proof}

Instead of proving Theorem \ref{konstru}, we will prove the following more general theorem.

\begin{thm}\label{konstru2}
For every $r$, if $n$ is large enough, there exists a constant $c_r$ such that we have the following. Let $X$ be a set of size $\lfloor 2^{n/2}\rfloor$ if $r=2$ and $\lfloor c_r\frac{n^{1+1/(r-2)}}{\log n}\rfloor$ if $r\ge 3$. For non-empty subsets $T$ of size at most $r$, we can assign a subset of $r$ colors $S(T)$ the following way.

$\bullet_1$ If $T\subset T'$, then $S(T')\subset S(T)$.

$\bullet_2$ If $T\subset X$ is of size $t\le r$, then $S(T)$ has size $r-t+1$, and for every color $s\in S(T)$ there are at least $\binom{r}{2}$ sets $T'$ of size $t+1$ with $S(T')=S(T)\setminus \{s\}$.

$\bullet_3$ For every color $i$, the graph $G_i$ consisting of the 2-subsets $T$ with $i\in S(T)$ is $K_n$-free.

\end{thm}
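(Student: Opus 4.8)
The plan is to build the assignment $S(T)$ by a probabilistic/greedy construction driven by a random coloring of pairs, then verify the three bullet conditions. Let me think about what structure naturally produces all three.

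Condition $\bullet_3$ says each color class $G_i$ is $K_n$-free as a graph. Condition $\bullet_2$ is a strong local expansion requirement. And $\bullet_1$ is a monotonicity/consistency condition. Let me reverse-engineer what $S$ should be.

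The natural guess: $S(T)$ records which colors $i$ have the property that "$T$ could be extended to a large monochromatic-in-$i$ clique," or more concretely, $S(T) = \{i : T \subseteq$ some color-$i$ structure$\}$. The monotonicity $\bullet_1$ ($T \subset T' \Rightarrow S(T') \subset S(T)$) is exactly the behavior of "the set of colors that work for the larger set is smaller."

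Let me think about the sizes. $|X| \approx n^{1+1/(r-2)}/\log n$ for $r \ge 3$. Condition $\bullet_3$: each $G_i$ is $K_n$-free. There are $r$ colors available (the codomain of $S$ has $r$ colors total, since $S(T)$ is "a subset of $r$ colors"). Wait — let me re-read.
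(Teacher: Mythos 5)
There is a genuine gap: your proposal stops before any proof begins. You set out a plan (a probabilistic/greedy construction driven by a random coloring of pairs), reverse-engineer the intended meaning of $S(T)$, and then break off mid-sentence while re-reading the statement. No construction is actually defined, and none of $\bullet_1$, $\bullet_2$, $\bullet_3$ is verified, so there is nothing that can be checked for correctness.

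Beyond being unfinished, the direction you sketch is unlikely to succeed as stated. The statement demands a very rigid graded structure: $|S(T)|$ must equal $r-t+1$ \emph{exactly} for every $t$-set, and $\bullet_2$ requires, for each $s\in S(T)$, many $(t+1)$-sets $T'$ with $S(T')$ equal to $S(T)\setminus\{s\}$ as a set, not merely contained in it. A one-shot random coloring of pairs gives no mechanism for enforcing these exact equalities across all set sizes simultaneously, and it also gives no explanation of the exponent $1+1/(r-2)$ in $|X|$. That exponent is the fingerprint of the paper's actual argument, which is an induction on $r$: the base case $r=2$ is Erd\H{o}s's probabilistic lower bound for Ramsey numbers, and the induction step builds $X$ out of roughly $n^{1/(r-2)}/\log n$ disjoint copies of the $(r-1)$-uniform construction $X'$ on $n-1$ vertices, assigning a new color $r$ to sets inside a copy and defining $S(T)$ for crossing sets via a counting argument on colors available in every copy that $T$ meets. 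That recursive ``copies'' structure is precisely what makes $\bullet_1$ and the exact-equality form of $\bullet_2$ verifiable, and it is the key idea missing from your proposal; randomness enters only at the bottom level $r=2$.
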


The above theorem implies Theorem \ref{konstru}, as the $r$-sets all have exactly one color, and every subedge of a hyperedge of color $i$ has $i$ among its colors, thus there is no $K_n$ among them by $\bullet_3$. This implies there is no Berge-$K_n$ in color $i$. The other properties are needed only for the induction on $r$.

\begin{proof}
We prove the theorem by induction on $r$. For the base case $r=2$ Erd\H os \cite{Erdos} showed that there is a blue-red coloring of a graph on $\lfloor2^{n/2}\rfloor$ vertices without a monochromatic $K_n$. We assign both colors to the singletons, and we only need that every vertex is contained in at least one edge in both colors. It is easy to see that the well-known construction of Erd\H os satisfies this property.

Let us continue with the induction step, and apply the induction hypothesis with $r-1$ in place of $r$ and $n'=\lfloor c'_{r}(n) n^{\frac{r-3}{r-2}}\log n\rfloor$ in place of $n$, for some $c'_r(n)$ chosen later. Assume first $r>3$. Then we have a set $X'$ of size 
\[\left\lfloor c_{r-1}\frac{n'{^{1+1/(r-3)}}}{\log n'}\right\rfloor=\frac{c_{r-1}c_r'(n){^{\frac{r-2}{r-3}}}n\frac{r-3}{r-2}\log n}{\frac{r-3}{r-2}\log n+\log c_r'+\log\log n}= n-1,\]
for a properly chosen $c_r'(n)$, and an assignments of subsets of $r-1$ colors satisfying $\bullet_1$, $\bullet_2$ and $\bullet_3$. Note that $1/\sqrt{c_{r-1}}\le c_r'(n)\le 2/c_{r-1}$, thus in further calculations we consider $c_r'(n)$ as a constant.

In case $r=3$, a similar calculation shows that we can again find $X'$ of size $n-1$ and assignments $S'$ of subsets of two colors satisfying $\bullet_1$, $\bullet_2$ and $\bullet_3$. We leave the details to the interested Reader.

%$\cH_{r-1}$ be an $(r-1)$-uniform hypergraph on $n-1$ vertices given by the statement. Then it does not have a Berge clique of order $c'_r n^{\frac{r-3}{r-2}}\log n$ for some constant $c'_r$. 
We take $\lfloor n^{1/(r-2)}\sqrt{c_{r-1}}/\log n\rfloor -1\le \lfloor n^{1/(r-2)}/c_r'(n)\log n\rfloor -1$ copies of $X'$. This is going to be the vertex set $X$ (and this defines $c_r$). Now we describe the assignment of the colors. Each set $T$ of size at most $r$ inside a copy of $X'$ will keep the $r-|T|$ colors already assigned to it (i.e. $S'(T)$), hence we need to assign one more color to each of them. We add color $r$ to $S'(T)$ for every set inside a copy to obtain $S(T)$.

Let us consider a set $T$ of size $t\le r$, and assume it intersects $k$ copies of $\cH_{r-1}$, in $t_1\ge t_2\ge \dots \ge t_k$ vertices (thus $\sum_{i=1}^kr_i=t$). Let $T_i$ be the intersection of $T$ with the $i$th copy of $\cH_{r-1}$. Then $|S'(T_i)|=r-t_i$.  If we add these numbers up for all $i\le k$, we obtain $kr-\sum_{i=1}^kt_i=kr-t$. This means that there are at least $k-1+r-t$ colors appearing $k$ times (as only $r-1$ colors are used there). We call these the \textit{available} colors for $T$. Let $S(T)$ be the set of the first $r-t+1$ available colors.

Let us go through the required properties, starting with $\bullet_1$. We assigned a set of $r-t+1$ colors to every subset of size $t\le r$. Let us consider $T\subset T'$. We can assume $|T'|=|T|+1$, as it will imply the other cases. If $T$ and $T'$ are inside a copy of $X'$, then $S(T')=S'(T')\cup \{r\}\subset S'(T)\cup \{r\}=S(T)$. If $T$ is inside a copy of $X'$ and $T'=T\cup \{v\}$ with $v$ from another copy, then $S'(T)$ is the set of available colors, thus $S(T')\subseteq S'(T)\subset S(T)$. If $T$ is not inside a copy of $X'$, let $T_i'$ be the intersection of $T'$ with the $i$th copy of $\cH_{r-1}$, thus $T_i\subset T_i'$. This implies that the set of available colors for $T'$ is a subset of the set of available colors for $T$. There is only one extra color available for $T$, thus the set of the first $r-t+1$ available colors for $T$ contains the set of the first $r-t$ available colors for $T'$.

Let us continue with $\bullet_2$. Assume first that $T$ is inside a copy of $X'$. If $s$ is not the new color $r$, then there are at least $\binom{r}{2}$ sets $T'$ of size $t+1$ with $S'(T')=S'(T)\setminus \{s\}$ for each of the $\binom{r}{2}$ sets $T'$. This implies $S(T')=S(T)\setminus \{s\}$ for these $\binom{r}{2}$ sets. If $s=r$, consider an arbitrary $v$ from another copy, and let $T'=T\cup \{v\}$. Then $S'(T)$ is the set of available colors for $T'$, thus $S(T')=S'(T)=S(T)\setminus \{r\}$. Obviously there are more than $\binom{r}{2}$ sets $T'$ that can be obtained this way. Finally, assume that $T$ is not inside a copy of $X'$. Any $s\in S(T)$ is in $S'(T_i)$ for every $i$. There are at least $\binom{r}{2}$ vertices $v$ in the $i$th copy of $\cH_{r-1}$ such that $S'(T_i\cup\{v\})=S'(T_i)-\{s\}$ by the induction hypothesis. Then $s$ is not an available color for $T\cup \{v\}$, thus it is not in $S(T\cup \{v\})$. But we have already seen that $S(T\cup \{v\})\subset S(T)$ and has size $r-t$, thus $S(T\cup \{v\})$ has to be $S(T)\setminus \{s\}$.

To see $\bullet_3$, observe that the graph consisting of the 2-subsets $T$ with $r\in S(T)$ does not have any edges between two copies of $X'$. As every copy has $n-1$ vertices, we are done. For the other colors, observe that an edge inside a copy of $X'$ got only $r$ as a new color. Thus, inside a copy, the largest clique of $G_i$ has size at most $\lfloor c'_{r}(n) n^{\frac{r-3}{r-2}}\log n\rfloor$. As there are less than $n/\lfloor c'_{r}(n) n^{\frac{r-3}{r-2}}\log n\rfloor$ copies, the largest clique in $G_i$ has less than $n$ vertices.

\end{proof}

Note that the $\log n$ in the denominator of the cardinality of $X$ in the above theorem could be improved to be $\log n^q$ for some $q<1$, but the improvement does not seem to worth the additional calculations.

\section{Concluding remarks}

The most important question regarding Berge-Ramsey problems may be the following. Is $R^c(B^rG)$ always polynomial in $n$ for $r\ge 3$? It is mentioned as a possibility in \cite{gmov}. We improved some of the lower bounds, but did not get any closer to answering this question.

As we are interested in the order of magnitude,
we did not make attempt to optimize the constant factors in Theorems \ref{thm2}, \ref{affin} and \ref{konstru}. Moreover, as we have mentioned, even the $\log$ factor could be improved in Theorem \ref{konstru2}, thus also in Theorem \ref{konstru}. As we have no reason to believe that the exponent of $n$ is close to being sharp in those statements, it seemed better to avoid lengthening the proof for a small improvement.

In Theorem \ref{affin}, the basic idea is to use an affine space of order $n-1$. It does not necessarily exist, but it exists if $n-1$ is a prime power. The set of prime powers is dense enough so that we obtain the desired result for the order of magnitude. However, we lose a large constant factor (exponential in $d$). In \textbf{(i)} we used $k$-nets to get rid of that factor. An equivalent of $k$-nets in higher dimension would help in the general case.


\begin{thebibliography}{99}

\bibitem{agy} M. Axenovich, A. Gy\'arf\'as. A note on Ramsey numbers for Berge-$G$ hypergraphs,  \textit{arXiv preprint} arXiv:1807.10062, 2018.

\bibitem{Erdos} P. Erd\H os. Some remarks on the theory of graphs, \textit{Bull. Amer. Math. Soc.}, 53 (4): 292--294, 1947.

\bibitem{fkl} Zolt\'an F\"uredi, Alexandr Kostochka, Ruth Luo. Avoiding long Berge cycles, \textit{Journal of Combinatorial Theory, Series B} 137, 55--64, 2019.

\bibitem{gmov} D. Gerbner, A. Methuku, G. Omidi, M. Vizer. Ramsey problems for Berge hypergraphs. \textit{arXiv preprint} arXiv:1808.10434, 2018.

\bibitem{gmp} D. Gerbner, A. Methuku, C. Palmer. A general lemma for Berge-Tur\'an hypergraph problems. \textit{arXiv preprint} arXiv:1808.10842, 2018.

\bibitem{gmv} D. Gerbner, A. Methuku, M. Vizer. Asymptotics for the Tur\'an number of Berge-$ K_{2, t}$. \textit{Journal of Combinatorial Theory, Series B} 2019.

\bibitem{gp1} D. Gerbner, C. Palmer. Extremal Results for Berge Hypergraphs. {\it SIAM Journal on Discrete Mathematics}, \textbf{31}.4: 2314--2327, 2017.

\bibitem{gp2} D. Gerbner, C. Palmer. Counting copies of a fixed subgraph in $ F $-free graphs. \textit{arXiv preprint} arXiv:1805.07520, 2018.

\bibitem{GMTthreshold} D. Gr\'osz, A. Methuku, C. Tompkins. Uniformity thresholds for the asymptotic size of extremal Berge-F-free hypergraphs. \textit{arXiv preprint} arXiv:1803.01953, 2018.

\bibitem{gylss} A. Gy\'arf\'as, J. Lehel, G. N. S\'ark\"ozy, R. H. Schelp.  Monochromatic Hamiltonian Berge-cycles in colored complete uniform hypergraphs. \textit{Journal of Combinatorial Theory, Series B}, \textbf{98}(2), 342--358, 2008.

\bibitem{gykl} E. Gy\H{o}ri, G. Y. Katona, N. Lemons. Hypergraph extensions of the Erd\H{o}s-Gallai Theorem. \textit{European Journal of Combinatorics}, \textbf{58}, 238--246, 2016.

\bibitem{gyl} E. Gy\H{o}ri, N. Lemons. Hypergraphs with no cycle of a given length. \textit{Combinatorics, Probability and Computing}, \textbf{21}(1-2), 193--201, 2012.

\bibitem{Lu} M.G. Lu.
 The maximum number of mutually orthogonal Latin squares. \textit{Kexue
Tongbao (English Ed.)} 30, 154--159, 1985.



\bibitem{reza} G. R. Omidi. A proof for a conjecture of Gy\'arf\'as, Lehel, S\'ark\"ozy and Schelp on Berge-cycles. \textit{arXiv preprint} arXiv:1404.3385, 2014.

\bibitem{STWZ} N. Salia, C. Tompkins, Z. Wang, O. Zamora. Ramsey numbers of Berge-hypergraphs and related structures. \textit{arXiv preprint} arXiv:1808.09863, 2018.

\end{thebibliography}
\end{document}